\newcommand{\eqdef  }{\overset{\mbox{\tiny{def}}}{=}}
\newcommand{\rth}{{\mathbb{R}^3}}
\newtheorem{theorem}{Theorem}
\newtheorem{lemma}[theorem]{Lemma}
\newtheorem{proposition}{Proposition}
\theoremstyle{definition}
\begin{document}
\title[$L^2$ decay for the linearized Landau equation]{$L^2$ decay for the linearized Landau equation with the specular boundary condition}
\author{Yan Guo}
\address{Brown University, Providence RI 02912, USA }
\email{yan\_guo@brown.edu}

\author{Hyung Ju Hwang}
\address{Department of Mathematics, POSTECH, Pohang 37673, Republic of Korea } 
\email{hjhwang@postech.ac.kr}

\author{Jin Woo Jang}
\address{Institute for Applied Mathematics, University of Bonn, 53115 Bonn, Germany}
\email{jangjinw@iam.uni-bonn.de}

\author{Zhimeng Ouyang}
\address{Brown University, Providence RI 02912, USA }
\email{zhimeng\_ouyang@brown.edu}

	\begin{abstract}
		In this paper, we develop an alternative approach to establish the $L^2$ decay estimate for the linearized Landau equation in a bounded domain with specular boundary condition. The proof is based on the methodology of proof by contradiction motivated by \cite{MR1908664} and \cite{MR2679358}.
	\end{abstract}
\maketitle
\section{Introduction}We consider the following linearized \textit{Landau equation}
 \begin{equation}\label{linear}
 \partial_tf+v\cdot \nabla_x f+Lf=\Gamma(g,f).
 \end{equation} The linear operator $L$ is defined as%
\begin{equation}		\label{L}
L = -A -K,
\end{equation} where the linear operator $A$  consists of the terms with at least one momentum derivative on $f$ as$$
Af \eqdef \mu^{-1/2}\partial_{i}\left\{  \mu^{1/2}\sigma^{ij}[\partial_{j}f+v_{j}f]\right\} =\partial_{i}[\sigma^{ij}\partial_{j}f]-\sigma^{ij}v_{i}v_{j}f+\partial_{i}\sigma^{i}f,
$$ and the linear operator $K$ consists of the rest of the operator $L$ which does not contain any momentum derivative of $f$ as$$
Kf\eqdef -\mu^{-1/2}\partial_{i}\left\{  \mu\left[  \phi^{ij}\ast\left\{  \mu^{1/2}[\partial_{j}f+v_{j}f]\right\}  \right]  \right\}.
$$ Note that the momentum derivative $\partial_jf$ inside $Kf$ can always be moved to $\mu^{1/2}$ and outside the convolution by the chain rule and a property of a convolution operator. On the other hand, the nonlinear operator $\Gamma$ is defined as
\begin{equation}		\label{Gamma}
\begin{split}
	\Gamma\lbrack g,f]  &  \eqdef \partial_{i}\left[  \left\{  \phi^{ij}\ast\lbrack\mu^{1/2}g]\right\}  \partial_{j}f\right]  -\left\{  \phi^{ij}\ast\lbrack	v_{i}\mu^{1/2}g]\right\}  \partial_{j}f\\
	&  \quad-\partial_{i}\left[  \left\{  \phi^{ij}\ast\lbrack\mu^{1/2}%
	\partial_{j}g]\right\}  f\right]  +\left\{  \phi^{ij}\ast\lbrack v_{i}\mu^{1/2}\partial_{j}g]\right\}  f,
\end{split}
\end{equation}where the diffusion matrix (collision frequency) $\sigma^{ij}_u$ is defined as
$$
\sigma_{u}^{ij}(v) \eqdef  \phi^{ij}*u = \int_{\mathbb{R}^{3}}\phi^{ij}(v-v^{\prime})u(v^{\prime})dv^{\prime}.
$$We also denote the special case when $u=\mu$ as
$$
\sigma^{ij} = \sigma^{ij}_{\mu}, \quad \sigma^{i} = \sigma^{ij}v_{j}.
$$
\subsection{The initial and boundary conditions}
 The initial-boundary conditions of $f$ for the specular reflection boundary that we consider are given by
 \begin{equation}\label{ib}
 \begin{cases}
 f(0,x,v)=f_0(x,v), \text{ if }x\in\Omega \text{ and }v\in\rth,\\
 f(t,x,v)=f(t,x,v-2(v\cdot n_x)n_x),\text{ if }x\in\partial\Omega\text{ and }v\cdot n_x<0,
 \end{cases}
 \end{equation} for some $\|f_0\|_{\infty,\vartheta+m}\le \epsilon,$ for some small $\epsilon>0$, $\vartheta\ge 0$ and $m>\frac{3}{2}.$ Throughout this paper, our domain $\Omega=\{x:\zeta(x)<0\}$ is connected and bounded with $\zeta(x)$ being a smooth function. We also assume that $\nabla\zeta(x) \neq 0$ at the boundary $\zeta(x)=0.$ We define the outward normal vector $n_x$ on the boundary $\partial \Omega$ as 
$$
n_x\eqdef  \frac{\nabla \zeta(x)}{|\nabla\zeta(x)|}.
$$We say that the domain $\Omega$ is rotationally symmetric if there exist vectors $x_0$ and $w$ such that
    $$((x-x_0)\times w)\cdot n_x =0,$$ for all $x\in\partial\Omega.$
    Without loss of generality, we assume that the conservation laws of total mass and energy for $t\ge 0$ terms of the perturbation $f$:
    \begin{equation}
    \label{conservationlaws}
        \begin{split}
           \int_{\Omega\times \rth}f(t,x,v)\sqrt{\mu}dxdv =0,\ \ 
           \int_{\Omega\times \rth}|v|^2f(t,x,v)\sqrt{\mu}dxdv &=0.
        \end{split}
    \end{equation}In addition, we assume the conservation of total angular momentum if $\Omega$ is rotationally symmetric: \begin{equation}
    \label{conservationangcon}
        \begin{split}
           \int_{\Omega\times \rth}((x-x_0)\times w)\cdot f(t,x,v)v\sqrt{\mu}dxdv &=0.
        \end{split}
    \end{equation}Define the energy\begin{equation}		\label{E}
    \mathcal{E}_{\vartheta}(f(t)) \eqdef   \big\|  f(t)\big\|_{2,\vartheta}^{2} + \int_{0}^{t} \big\|  f(s)\big\|_{\sigma,\vartheta}^{2} ds,
\end{equation} 
where $$\Vert f\Vert_{p,\vartheta}^{p}\eqdef  \int_{\Omega\times\mathbb{R}^{3}}(1+|v|)^{p\vartheta}|f|^{p}dxdv$$ and	$$\Vert f\Vert_{\sigma,\vartheta}^{2}\eqdef  \iint_{\Omega\times\mathbb{R}^{3}}(1+|v|)^{2\vartheta}\left[  \sigma^{ij}\partial_{i}f\partial_{j}f+\sigma^{ij}v_{i}v_{j}f^{2}\right]  dxdv.$$
\subsection{Main theorem}
We now introduce our main theorem on the $L^2$ decay estimates for the weak solutions $f$ to \eqref{linear}.
\begin{theorem}[Theorem 13 of \cite{ARMAjang}]\label{linear l2}
Let $f$ be the weak solution of \eqref{linear} with initial-boundary value conditions \eqref{ib}, which satisfies the conservation laws \eqref{conservationlaws}, and \eqref{conservationangcon} if $\Omega$ has a rotational symmetry. 
Suppose that $\|g\|_{L^\infty_m} < \epsilon$ for some $\epsilon>0$ and $m> 3/2$.
For any $\vartheta\in 2^{-1}\mathbb{N} \cup\{0\}$, there exist $C$ and $\epsilon=\epsilon(\vartheta)>0$ such that 
\begin{equation}	\label{Eq : energy estimate linear}
	\sup_{0 \le s< \infty}\mathcal{E}_{\vartheta}(f(s)) \le C 2^{2\vartheta} \mathcal{E}_{\vartheta}(f_0),
\end{equation}
and
\begin{equation}	\label{Eq : decay estimate linear}
	\|f(t)\|_{2,\vartheta} \le C_{\vartheta,k} \left(\mathcal{E}_{\vartheta+\frac{k}{2}}(0) \right)^{1/2}\left(  1+ \frac{t}{k}\right)^{-k/2}%
\end{equation}
for any $t>0$ and $k\in\mathbb{N}$, where $\mathcal{E}_\vartheta(f(t))$ is defined as \eqref{E}.
\end{theorem}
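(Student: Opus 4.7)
The plan is to decompose $f = \mathbf{P}f + (\mathbf{I}-\mathbf{P})f$ via the hydrodynamic projection onto $\ker L = \operatorname{span}\{\sqrt{\mu},\, v\sqrt{\mu},\, |v|^2\sqrt{\mu}\}$ and to couple a weighted energy inequality with a macroscopic estimate proved by contradiction. First I would test \eqref{linear} against $(1+|v|)^{2\vartheta} f$ and integrate over $\Omega\times\mathbb{R}^3$: the transport term produces a boundary flux that vanishes because the specular reflection $v \mapsto v - 2(v\cdot n_x)n_x$ is measure- and $|v|$-preserving and exactly matches incoming with outgoing traces; coercivity of $L$ yields $\langle Lf, f\rangle_{2,\vartheta} \geq c\|(\mathbf{I}-\mathbf{P})f\|_{\sigma,\vartheta}^2$ modulo weight-commutator terms responsible for the $2^{2\vartheta}$ prefactor; and the nonlinear contribution satisfies $|\langle \Gamma(g,f),f\rangle_{2,\vartheta}| \lesssim \|g\|_{L^\infty_m}\|f\|_{\sigma,\vartheta}^2$, absorbed into the dissipation for $\epsilon$ small.

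The centerpiece is the space--time macroscopic bound
\begin{equation*}
\int_0^T \|\mathbf{P}f(s)\|_{\sigma,\vartheta}^2 \, ds \;\leq\; C \int_0^T \|(\mathbf{I}-\mathbf{P})f(s)\|_{\sigma,\vartheta}^2 \, ds,
\end{equation*}
which I would establish by the contradiction scheme of \cite{MR1908664, MR2679358}. Assuming failure produces a normalized sequence $f_n$ with $\int_0^T\|\mathbf{P}f_n\|_{\sigma,\vartheta}^2 = 1$ and $\int_0^T\|(\mathbf{I}-\mathbf{P})f_n\|_{\sigma,\vartheta}^2 \to 0$, still satisfying the conservation laws. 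Since $\mathbf{P}f_n = \{a_n + b_n\cdot v + c_n|v|^2\}\sqrt{\mu}$ is finite-dimensional in $v$, compactness of $(a_n, b_n, c_n)$ in $(t,x)$ can be extracted by averaging, yielding a weak limit $f_\infty = \mathbf{P}f_\infty$ which weakly solves $\partial_t f_\infty + v\cdot\nabla_x f_\infty = 0$ with specular BC. Matching powers of $v$ in this transport identity produces a closed system forcing $a$, $c$ to be constants and $b = c_1 x + \beta + \omega\times x$; the specular constraint $b\cdot n_x = 0$ on $\partial\Omega$ then leaves only $b\equiv 0$ unless $\Omega$ is rotationally symmetric, in which case the surviving rotational mode $\omega\times(x-x_0)$ is killed by \eqref{conservationangcon}. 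Finally \eqref{conservationlaws} forces $a = c = 0$, contradicting $\|\mathbf{P}f_\infty\|_{\sigma,\vartheta} > 0$.

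Combining the macroscopic bound with the dissipation inequality and Gr\"{o}nwall yields the energy estimate \eqref{Eq : energy estimate linear}. For the polynomial decay \eqref{Eq : decay estimate linear} I would run a standard weight-trading interpolation between $\|\cdot\|_{\sigma,\vartheta}$ and $\|\cdot\|_{2,\vartheta+k/2}$, exploiting the $(1+|v|)^{-1}$ degeneracy of the Coulomb--Landau dissipation: combined with the uniform bound on $\mathcal{E}_{\vartheta + k/2}$ supplied by \eqref{Eq : energy estimate linear}, this produces a differential inequality of the form $y'(t) + c\, y(t)^{1+1/k} \leq 0$ for $y(t) = \|f(t)\|_{2,\vartheta}^{2}$, whose integration delivers the asserted $(1+t/k)^{-k/2}$ rate.

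The main obstacle I expect is the contradiction step. On the one hand, extracting a non-trivial limit requires genuine compactness of the macroscopic moments despite the velocity degeneracy of $\sigma^{ij}$ and the possibility that specular trajectories graze $\partial\Omega$ for long times; on the other hand, the classification of solutions to the limiting macroscopic system under $b\cdot n_x = 0$ on $\partial\Omega$ is a geometric question whose answer hinges on whether $\Omega$ admits a one-parameter rotational symmetry group. Closing these two points and matching the resulting rigidity with the precise form of \eqref{conservationangcon} is where the real work of the proof concentrates.
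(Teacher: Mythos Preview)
Your plan is correct and follows essentially the same route as the paper: contradiction argument for the macroscopic bound on a finite time window, averaging-lemma compactness, classification of the limit via the transport identity plus specular condition plus conservation laws, then weighted energy estimates and weight-trading for decay. Two points where the paper's execution differs from your sketch are worth flagging. First, the strong-convergence step (which you rightly identify as the crux) is handled by a four-piece decomposition of $(\varepsilon,1-\varepsilon)\times\Omega\times\mathbb{R}^3$ into interior, large-velocity, singular-grazing, and non-grazing regions; the non-grazing part is the delicate one and is controlled by tracing characteristics back to an inner boundary $\{\zeta=-\varepsilon^4\}$ and invoking a trace estimate there, which feeds back into the interior compactness already established. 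Second, the paper proves the coercivity $\int_0^1(Lf,f)\,ds\ge\delta\int_0^1\|f\|_\sigma^2\,ds$ on the \emph{fixed} unit interval and then splits $[0,t]=\bigcup_{j}[j,j+1]\cup[N,t]$ to globalize, rather than running the contradiction on $[0,T]$ directly; this localization is what makes the uniform $L^2$ bounds on the normalized sequence tractable. Finally, the limit $Z$ coming out of the transport identity has genuinely time-dependent coefficients (quadratic in $t$) \`a la Lemma~6 of \cite{MR2679358}, and it is the mass/energy conservation laws that kill the time dependence before the specular condition and angular-momentum law finish the job---your sketch collapses this slightly by asserting $a,c$ constant from the outset.
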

  In order to prove Theorem \ref{linear l2}, it is crucial to obtain the following positivity of $L$:
    \begin{proposition}\label{main coercivity}Let $f$ be a weak solution of \eqref{linear}-\eqref{conservationangcon} with  $\mathcal{E}_\vartheta(f(0))$ bounded for some $\vartheta\ge 0.$ Then there exists a sufficiently small positive constant $\epsilon>0$ such that if \begin{equation}\label{gsmall ep}\|g\|_{L^\infty_m}\le \epsilon,\end{equation}for some $m>\frac{3}{2}$, then we have $\delta_\epsilon>0$ such that 
    $$\int_0^1 (Lf,f)ds\ge \delta_\epsilon \int_0^1 \|f\|^2_{\sigma}ds.$$
    \end{proposition}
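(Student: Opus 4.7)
The plan is to argue by contradiction, following the framework of \cite{MR1908664} and \cite{MR2679358}. Suppose the conclusion fails; then one can extract a sequence of weak solutions $\{f_n\}$ of \eqref{linear}--\eqref{conservationangcon} (with associated $g_n$, $\|g_n\|_{L^\infty_m}\le 1/n$) such that
\[
\int_0^1\|f_n\|_\sigma^2\,ds=1,\qquad \int_0^1(Lf_n,f_n)\,ds\to0.
\]
Let $\mathbf{P}$ denote the orthogonal projection onto $\ker L=\mathrm{span}\{\sqrt\mu,\,v_i\sqrt\mu,\,|v|^2\sqrt\mu\}$, and write $\mathbf{P}f_n=\{a_n(t,x)+b_n(t,x)\cdot v+c_n(t,x)|v|^2\}\sqrt\mu$. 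The classical coercivity of the linearized Landau operator, $(Lf,f)\ge\delta_0\|\{\mathbf{I}-\mathbf{P}\}f\|_\sigma^2$, forces $\int_0^1\|\{\mathbf{I}-\mathbf{P}\}f_n\|_\sigma^2\,ds\to0$, so all the $\sigma$-mass concentrates in the hydrodynamic part: $\int_0^1\|\mathbf{P}f_n\|_\sigma^2\,ds\to1$.

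The next step is to pass to a limit. Since $\mathbf{P}f_n$ is a polynomial in $v$ times $\sqrt\mu$, the $\sigma$-norm controls $(a_n,b_n,c_n)$ in $L^2([0,1]\times\Omega)$, so a subsequence converges weakly to $(a,b,c)$ and hence $\mathbf{P}f_n\rightharpoonup f=(a+b\cdot v+c|v|^2)\sqrt\mu$. Using the trilinear bound $|(\Gamma(g_n,f_n),\psi)|\lesssim\|g_n\|_{L^\infty_m}\|f_n\|_\sigma\|\psi\|_\sigma$ together with the smallness of $g_n$ and the vanishing of the microscopic residual, the limit satisfies, distributionally, the free transport equation $\partial_t f+v\cdot\nabla_x f=0$ on $[0,1]\times\Omega\times\mathbb{R}^3$. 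The specular condition \eqref{ib} passes to the limit in trace, and for the hydrodynamic profile $f=(a+b\cdot v+c|v|^2)\sqrt\mu$ is equivalent to $b\cdot n_x=0$ on $\partial\Omega$.

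Writing $\partial_t f+v\cdot\nabla_x f=0$ as a polynomial identity in $v$ and equating coefficients by powers of $v$ yields a closed system for $(a,b,c)$: the cubic coefficient gives $\nabla_x c=0$, the quadratic coefficient gives $\partial_i b_j+\partial_j b_i+2\,\partial_t c\,\delta_{ij}=0$, the linear coefficient gives $\partial_t b+\nabla_x a=0$, and the constant coefficient gives $\partial_t a=0$. The divergence theorem, together with $b\cdot n_x=0$, then forces $\partial_t c=0$, whence $b$ is a genuine Killing field of $\mathbb{R}^3$ tangent to $\partial\Omega$: $b(t,x)=\beta(t)+\omega(t)\times x$. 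A short analysis of $\partial_t b+\nabla_x a=0$ with $\partial_t a=0$ then shows that $\omega$ and $\beta$ are time independent and $a$ is spatially constant. Finally, since $n_x$ sweeps out $\mathbb{S}^2$ on the boundary of a bounded smooth domain, the boundary condition $(\beta+\omega\times x)\cdot n_x=0$ forces $\beta=0=\omega$ unless $\Omega$ is rotationally symmetric, in which case $b$ is a multiple of the rotation generator $(x-x_0)\times w$; that multiple is eliminated by the angular-momentum conservation law \eqref{conservationangcon}. With $b\equiv0$ and $a,c$ constants, the mass and energy conservation laws \eqref{conservationlaws} give $a=c=0$. Hence $(a,b,c)\equiv0$, contradicting $\int_0^1\|\mathbf{P}f_n\|_\sigma^2\,ds\to1$.

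The main obstacle is the second step: one needs enough compactness to pass to the limit both in the nonlinear term $\Gamma(g_n,f_n)$ and in the boundary trace. The former relies on the smallness $\|g_n\|_{L^\infty_m}\to0$ and velocity-averaging for the moments of $f_n$; the latter requires controlling the trace of $f_n$ on $\partial\Omega$, which follows from the uniform bound on $\partial_t f_n+v\cdot\nabla_x f_n=-Lf_n+\Gamma(g_n,f_n)$ in an appropriate dual space, yielding a well-defined specular trace in the limit.
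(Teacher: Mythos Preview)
Your overall framework is correct and mirrors the paper's: contradiction, normalization, vanishing microscopic part, identification of the weak limit as a macroscopic solution of free transport, and elimination of that limit via the specular condition and the conservation laws. The Killing-field analysis of the macroscopic system is also essentially right (though your claim that $\beta$ is time-independent actually uses $b\cdot n_x=0$, not just $\partial_t b+\nabla_x a=0$ with $\partial_t a=0$; without the boundary condition, $\beta$ can be affine in $t$).

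There is, however, a genuine gap at the final logical step. You write ``$(a,b,c)\equiv0$, contradicting $\int_0^1\|\mathbf{P}f_n\|_\sigma^2\,ds\to1$.'' But you only extracted a \emph{weak} limit: weak $L^2([0,1]\times\Omega)$ convergence of $(a_n,b_n,c_n)$ to $(0,0,0)$ is fully compatible with $\int_0^1\|\mathbf{P}f_n\|_\sigma^2\,ds\to1$. To obtain a contradiction you must upgrade to \emph{strong} convergence of $\mathbf{P}f_n$ in $\int_0^1\|\cdot\|_\sigma^2\,ds$, and this is precisely where the bulk of the paper's work lies. The paper carries this out in three layers: (i) a Gr\"onwall-type energy argument yielding the uniform-in-time bound $\sup_{0\le s\le1}\|(1+|v|)^{-1/2}Z_n(s)\|_{L^2}\le C$, which rules out concentration at $t=0,1$; (ii) interior compactness of the moments $\langle Z_n,e_j\rangle$ in $L^2_{t,x}$ via the averaging lemma, after rewriting $-LZ_n+\Gamma(g_n,Z_n)$ in the form $\partial_{v}(\cdot)+(\cdot)$ with $L^2$ entries; and (iii) a careful treatment near the spatial boundary, splitting into a thin grazing region (small measure in $v$) and a non-grazing region where a trace estimate transfers control to an inner boundary $\{\zeta=-\varepsilon^4\}$, which is then handled by the interior compactness already obtained. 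Only after this does one get $\int_0^1\|\mathbf{P}Z\|_\sigma^2\,ds=1$, and then $Z=0$ is indeed a contradiction.

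Your closing paragraph acknowledges compactness difficulties but mislocates them. You frame them as obstacles to passing to the limit in $\Gamma(g_n,f_n)$ and in the boundary trace; the former is in fact harmless since $\|g_n\|_{L^\infty_m}\to0$, and the real need for compactness is to close the contradiction itself. The boundary-trace issue is genuine, but in the paper it is subsumed into the strong-convergence argument: once strong convergence holds up to the boundary on the non-grazing set, the specular relation passes to the limit there, and continuity of the explicit macroscopic form extends it to all $v$, yielding $b\cdot n_x=0$.
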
The proof for this Proposition will be given in the next section.
\section{Positivity of $L$}
   In order to prove the positivity of $L$, it suffices to prove the following proposition as we have Lemma 5 of \cite{guo2002landau}:
    \begin{proposition}	\label{main proposition}Let $f$ be a weak solution of \eqref{linear}-\eqref{conservationangcon} with $\mathcal{E}_\vartheta(f(0))$ bounded for some $\vartheta\ge 0.$ Then there exists a sufficiently small $\epsilon>0$ such that if $\|g\|_{L^\infty_m} \le \epsilon$ for some $m>\frac{3}{2}$,
 we have $C_\epsilon>0$ such
 	that 
 	\[
 	\int_{0}^{1} \|P f(\tau)\|_{\sigma}^{2}ds \le C_\epsilon \int
 	_{0}^{1} \|(I-P)f(\tau)\|_{\sigma}^{2}ds.
 	\]
    \end{proposition}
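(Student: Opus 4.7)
\noindent\emph{Proof plan.} The argument is by contradiction, in the spirit of \cite{MR1908664,MR2679358}.

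Suppose the estimate fails. Then for each $n\in\mathbb{N}$ there exist $g_n$ with $\|g_n\|_{L^\infty_m}\le\epsilon$ and a weak solution $f_n$ of \eqref{linear}--\eqref{conservationangcon} which, after rescaling, satisfies
\[
\int_0^1 \|Pf_n(\tau)\|_\sigma^2\,d\tau = 1,\qquad \int_0^1 \|(I-P)f_n(\tau)\|_\sigma^2\,d\tau \le \frac{1}{n}.
\]
Decompose $Pf_n=\{a_n(t,x)+b_n(t,x)\cdot v+c_n(t,x)|v|^2\}\sqrt\mu$. The normalisation yields a uniform $L^2((0,1)\times\Omega)$ bound on $(a_n,b_n,c_n)$; passing to a subsequence, extract weak limits $(a,b,c)$ and set $\bar f\eqdef (a+b\cdot v+c|v|^2)\sqrt\mu$.

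Next, take the $P$- and $(I-P)$-projections of \eqref{linear}. The $P$-projection, using $PL = 0$ and the fact that $\Gamma(g_n,f_n)$ is perturbatively small by \eqref{gsmall ep}, yields in the limit the acoustic relation
\[
\partial_t \bar f + P\bigl(v\cdot\nabla_x \bar f\bigr) = 0.
\]
The $(I-P)$-projection, using that $L(I-P)f_n$, $(I-P)\Gamma(g_n,f_n)$, and $\partial_t(I-P)f_n$ all tend to zero, yields
\[
(I-P)\bigl(v\cdot\nabla_x \bar f\bigr) = 0,
\]
so $v\cdot\nabla_x\bar f \in \ker L$. Expanding in powers of $v$ forces $\nabla_x c = 0$ and $\partial_ib_j+\partial_jb_i = \tfrac23(\nabla_x\cdot b)\,\delta_{ij}$, i.e.\ $b$ is a conformal Killing field on $\Omega$. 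Plugging back into the acoustic system one deduces $\partial_t a\equiv 0$, $\nabla a\equiv 0$, $\partial_t b\equiv 0$, and $\nabla_x\cdot b\equiv 0$.

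The specular boundary condition passes to the limit as $b(t,\cdot)\cdot n_x = 0$ on $\partial\Omega$. A divergence-free conformal Killing field tangent to $\partial\Omega$ vanishes unless $\Omega$ is rotationally symmetric, in which case it is of the form $\beta((x-x_0)\times w)$ for some constant $\beta$. The conservation laws \eqref{conservationlaws} then give $a+3c = 0$ and $3a+15c = 0$, whence $a=c=0$; in the rotationally symmetric case \eqref{conservationangcon} forces $\beta = 0$. Hence $\bar f\equiv 0$.

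The main technical obstacle is the final step: upgrading the weak convergence $(a_n,b_n,c_n)\rightharpoonup 0$ to strong $L^2((0,1)\times\Omega)$ convergence, so as to contradict $\int_0^1\|Pf_n\|_\sigma^2\,d\tau = 1$. I would apply a velocity-averaging lemma adapted to the Landau anisotropy to the transport equation $\partial_tf_n+v\cdot\nabla_xf_n = -Lf_n+\Gamma(g_n,f_n)$, whose right-hand side is controlled in a weighted $L^2$, to obtain compactness of velocity moments $\int f_n\phi(v)\,dv$, and hence of $(a_n,b_n,c_n)$, in $L^2((0,1)\times\Omega)$. The specular boundary requires careful treatment at the grazing set $\{v\cdot n_x=0\}$ via the trace framework of \cite{MR2679358}, which is the most delicate ingredient.
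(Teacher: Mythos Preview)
Your overall strategy matches the paper's: contradiction, normalization, extraction of a weak limit $\bar f=P\bar f$ satisfying a limiting transport equation, structural analysis to force $\bar f=0$, and then an upgrade to strong convergence via velocity averaging in the interior together with the trace framework of \cite{MR2679358} near the boundary. Two points, however, are genuinely off.

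First, your contradiction setup keeps $\|g_n\|_{L^\infty_m}\le\epsilon$ fixed. The negation of ``there exists $\epsilon>0$ such that for all $\|g\|\le\epsilon$ there is $C_\epsilon$'' is ``for every $\epsilon>0$ the estimate fails,'' so one may diagonalize and take $\|g_n\|_{L^\infty_m}\le 1/n\to 0$. This is what the paper does, and it matters: with $g_n\to 0$ the term $\Gamma(g_n,f_n)$ vanishes in the distributional limit and $\bar f$ satisfies the \emph{free} transport equation $\partial_t\bar f+v\cdot\nabla_x\bar f=0$. With $\epsilon$ fixed your $(I-P)$--projection step fails, because $(I-P)\Gamma(g_\infty,\bar f)$ need not vanish, and your ``perturbatively small'' remark does not close the argument.

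Second, the macroscopic relations alone do \emph{not} yield $\nabla a\equiv 0$, $\partial_t b\equiv 0$, $\nabla_x\cdot b\equiv 0$. From $\partial_t\bar f+v\cdot\nabla_x\bar f=0$ one obtains only the family in Lemma~6 of \cite{MR2679358}, in which $a$ may depend on $x$ and $b,c$ may depend on $t$; the paper then uses the conservation laws \eqref{conservationlaws} to kill the time-growing modes ($c_0=c_1=0$) \emph{before} the specular condition reduces $b$ to a rigid rotation. Your shortcut collapses these distinct steps. You also omit the uniform-in-time bound $\sup_{0\le s\le 1}\|(1+|v|)^{-1/2}Z_n(s)\|_{L^2}\le C$ (derived in the paper from an energy inequality and a Gr\"onwall argument), which is what rules out concentration at the time endpoints $s=0,1$ in the strong-convergence step; your $L^2((0,1)\times\Omega)$ bound on $(a_n,b_n,c_n)$ is not sufficient for this.
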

    \begin{proof}
    If the proposition is not true, then there exist a sequence of family $g_n$ and a sequence of solutions $f_n$ to \eqref{linear}-\eqref{conservationangcon} with $g=g_n$ and $f=f_n$ such that \begin{equation}\label{gsmall}\|g_n\|_{L^\infty_m}\le \frac{1}{n},\end{equation} for some $m>\frac{3}{2}$, but 
 	\begin{equation}\label{contradiction} \int
 	_{0}^{1} \|(I-P)f_n(\tau)\|_{\sigma}^{2}ds\le \frac{1}{n}\int_{0}^{1} \|P f_n(\tau)\|_{\sigma}^{2}ds ,\end{equation} for any $n$.

We first prove the weak compactness of $f_n$.  
We first reformulate the equation \eqref{linear} as
\begin{equation}
	f_{t}+v\cdot\nabla_{x}f=\bar{A}_{g}f+\bar{K}_{g}f, \label{rearrange Landau}%
\end{equation}%
\begin{equation}		\label{Eq : bar A}
	\begin{split}
		\bar{A}_{g}f  &  :=\partial_{i}\left[  \left\{  \phi^{ij}\ast\lbrack\mu	+\mu^{1/2}g]\right\}  \partial_{j}f\right]\\
		&  \quad-\left\{  \phi^{ij}\ast\lbrack v_{i}\mu^{1/2}g]\right\}  \partial_{j}f-\left\{  \phi^{ij}\ast\lbrack\mu^{1/2}\partial_{j}g]\right\}
		\partial_{i}f\\
		&  =: \nabla_{v}\cdot(\sigma_{G}\nabla_{v}f)+a_{g}\cdot\nabla_{v}f,
	\end{split}
\end{equation}
\begin{equation}		\label{Eq : bar K}
	\begin{split}
		\bar{K}_{g}f  &  :=Kf+\partial_{i}\sigma^{i}f  -\sigma^{ij}v_{i}v_{j}f\\
		&  \quad-\partial_{i}\left\{  \phi^{ij}\ast\lbrack\mu^{1/2}\partial	_{j}g]\right\}  f+\left\{  \phi^{ij}\ast\lbrack v_{i}\mu^{1/2}\partial_{j}g]\right\}  f,
	\end{split}
\end{equation}
where $G=\mu+\sqrt{\mu}g$, \begin{equation}		\label{Eq : K}
	Kf:=-\mu^{-1/2}\partial_{i}\left\{  \mu\left[  \phi^{ij}\ast\left\{  \mu^{1/2}[\partial_{j}f+v_{j}f]\right\}  \right]  \right\}  ,
\end{equation}  and $
 \sigma^{ij} = \sigma^{ij}_{\mu}, \quad \sigma^{i} = \sigma^{ij}v_{j},
$ with $$
 \sigma_{u}^{ij}(v) \eqdef  \phi^{ij}*u = \int_{\mathbb{R}^{3}}\phi^{ij}(v-v^{\prime})u(v^{\prime})dv^{\prime}.
 $$
 Note that the eigenvalues $\lambda(v)$ of $\sigma(v)$ satisfy \cite[Lemma 3]{guo2002landau} \begin{equation}\label{spectrumbound}(1+|v|)^{-3}\lesssim \lambda(v) \lesssim (1+|v|)^{-1}.\end{equation}
	For any fixed $l<0$, we multiply \eqref{linear} for $f=f_n$ and $g=g_n$ by $(1+|v|)^{2l}f_n$ and integrate both sides of the resulting equation and obtain
	\begin{multline}		\notag
		\iint_{\Omega \times \mathbb{R}^3} \frac{1}{2} \left( (1+|v|)^{2l}f_n^2(t,x,v) - (1+|v|)^{2l}f_n^2(t_0,x,v) \right) dxdv \\+ \int_{t_0}^t \iint_{\Omega\times \mathbb{R}^3}  (1+|v|)^{2l}(Lf_n)f_n dxdvds=\int_{t_0}^t \iint_{\Omega\times \mathbb{R}^3}(1+|v|)^{2l}\Gamma(g,f_n)f_n dxdvds\\
		\le \int_{t_0}^t\|g_n\|_\infty \|f_n\|^2_{\sigma,l}ds,
	\end{multline} by Theorem 2.8 of \cite{2016arXiv161005346K}. Also, since $l<0,$ we deduce by Lemma 6 of \cite{guo2002landau} that $$\int_{t_0}^t \iint_{\Omega\times \mathbb{R}^3}  (1+|v|)^{2l}(Lf_n)f_n dxdvds\ge \int_{t_0}^tds\ \left(\frac{1}{2}\|f_n(s)\|_{\sigma,l}^2 -C_l \|(1+|v|)^lf_n(s)\|^2_{L^2}\right).$$
Thus, we have
	\begin{multline}		\notag
 \frac{1}{2}  \|(1+|v|)^lf_n(t)\|^2_{L^2} +\int_{t_0}^tds\ \frac{1}{4}\|f_n(s)\|_{\sigma,l}^2\\ \le 	 \|(1+|v|)^lf_n(t_0)\|^2_{L^2}+ C \int_{t_0}^tds \|(1+|v|)^lf_n(s)\|^2_{L^2}.
\end{multline}
	Thus, by \eqref{gsmall} and the Gr\"onwall inequality, we obtain that
		 \begin{equation}\label{fnbounded}\|(1+|v|)^lf_n(t)\|_{L^2}^2+\int_{t_0}^t \|f_n(s)\|^2_{\sigma,l}\  ds\le C e^{t-t_0} \|(1+|v|)^lf_n(t_0)\|^2_{L^2}. \end{equation}
		 On the other hand, we note that
		 $$\|f\|_\sigma \ge C \|(1+|v|)^{-1/2}f\|_{L^2},$$ by \eqref{spectrumbound}. Thus we have
		 \begin{multline*}
		 	\frac{d}{dt}\int_{t_0}^t\|f_n(s)\|^2_\sigma ds=\|f_n(t)\|^2_\sigma \ge C\|(1+|v|)^{-1/2}f_n(t)\|^2_{L^2}\\\ge C\|(1+|v|)^{-1/2}f_n(t_0)\|^2_{L^2}\\-2C\int_{t_0}^t \iint_{\Omega\times \mathbb{R}^3}  (1+|v|)^{-1}(Lf_n)f_n dxdvds-2C\int_{t_0}^t\|g_n\|_\infty \|f_n\|^2_{\sigma,-1/2}ds\\
		 	\ge C\|(1+|v|)^{-1/2}f_n(t_0)\|^2_{L^2}-2C\int_{t_0}^t\left(\frac{3}{2}\|f(s)\|_{\sigma,-1/2}^2 -C \|f(s)\|^2_\sigma\right)ds\\-2C\int_{t_0}^t\|g_n\|_\infty \|f_n\|^2_{\sigma,-1/2}ds
		 	\ge C\|(1+|v|)^{-1/2}f_n(t_0)\|^2_{L^2}-C'\int_{t_0}^t \|f(s)\|^2_\sigma ds ,
		 \end{multline*}for some $C'>0$ by Lemma 2.7 of \cite{2016arXiv161005346K}.
	 By \eqref{gsmall} and the Gr\"onwall inequality, we obtain that \begin{equation}\label{85}\int_{t_0}^t \|f_n(s)\|^2_\sigma ds \ge 
	 C(1-e^{-C'(t-t_0)})\|(1+|v|)^{-1/2}f_n(t_0)\|^2_{L^2}.\end{equation}
Now we define the normalized term $Z_n$ of $f_n$ as
$$Z_n\eqdef \frac{f_n}{\sqrt{\int_0^1 \|Pf_n\|_\sigma^2 ds }}.$$
For $s\in [0,1]$, we have 
$$
\|(1+|v|)^{-1/2}Z_n(s)\|^2_{L^2}
=\frac{\|(1+|v|)^{-1/2}f_n(s)\|^2_{L^2}}{\sqrt{\int_0^1\|Pf_n\|^2_\sigma d\tau}}\le \frac{C e^{s}\|(1+|v|)^{-1/2}f_n(0)\|^2_{L^2}}{\int_0^1\|Pf_n\|^2_\sigma d\tau},
$$by \eqref{fnbounded} for $l=-1/2$. 
On the other hand, by the assumption \eqref{contradiction} we have
\begin{multline*}
	(n+1)  \int_0^1\|Pf_n\|^2_\sigma d\tau\ge n \int_0^1\|Pf_n\|^2_\sigma d\tau+n\int_0^1\|(1-P)f_n\|^2_\sigma d\tau\ge n \int_0^1\|f_n\|^2_\sigma d\tau.
\end{multline*}
Thus,
\begin{multline*}
\|(1+|v|)^{-1/2}Z_n(s)\|^2_{L^2}
=\frac{\|(1+|v|)^{-1/2}f_n(s)\|^2_{L^2}}{\sqrt{\int_0^1\|Pf_n\|^2_\sigma d\tau}}\\\le \frac{n+1}{n}\frac{C e^{s}\|(1+|v|)^{-1/2}f_n(0)\|^2_{L^2}}{\int_0^1\|f_n\|^2_\sigma d\tau} \le 2\frac{C \|(1+|v|)^{-1/2}f_n(0)\|^2_{L^2}}{\int_0^1\|f_n\|^2_\sigma d\tau},
\end{multline*} for any $n\ge 1.$
Now, by \eqref{85}, we have
$$\int_0^s \|f_n(\tau)\|^2_\sigma d\tau \ge 
C(1-e^{-Cs})\|(1+|v|)^{-1/2}f_n(0)\|^2_{L^2}.$$
Thus, we obtain the uniform bound $$\sup_{0\le s\le 1}\|(1+|v|)^{-1/2}Z_n(s)\|^2_{L^2}\le C$$ for some $C>0$. Also, by the normalization we already had $\int_0^1 \|Z_n(s)\|^2_\sigma ds=1$.
	Note that this will also imply that there is no concentration in time. Therefore, there exists the weak limit $Z$ of $Z_n$ in $\int_0^1 \|\cdot \|_\sigma^2 ds$. Also, by \eqref{contradiction}, we have 
\begin{equation}\label{I-P limit}\int_0^1 \|(I-P)Z_n \|_\sigma^2 ds\le \frac{1}{n}\rightarrow 0.\end{equation}
    By the triangle inequality, we also have that $\int_0^1 \|PZ_n(s)\|^2_\sigma ds$ is uniformly bounded from above. In addition, the norm $\|\cdot\|_\sigma$ is an anisotropic Sobolev norm with respect to direction of the velocity $v$ by definition. Since the eigenvalues $\lambda(v)$ of the matrix $\sigma(v)$ satisfies the bound \eqref{spectrumbound}, the normed vector space with the norm $\|\cdot \|_\sigma$ can be understood as a weighted $L^2$ Sobolev space and is reflexive. Then by Alaoglu's theorem and Eberlein–Šmulian's theorem, $PZ_n$ converges weakly to $PZ$ in $\int_0^1 \|\cdot \|_\sigma^2 ds$ up to a subsequence.
    Thus, we conclude that $(I-P)Z =0$ and $Z=PZ$.
    Thus, we can write $Z(t,x,v)$ as
    $$Z(t,x,v)=(a(t,x)+b(t,x)\cdot v + c(t,x)|v|^2)\sqrt{\mu}.$$
    Also, by taking the limit $n\rightarrow \infty$, we note that the limit $Z$ satisfies 
    \begin{equation}\label{limit eq}\partial_t Z+v\cdot \nabla_x Z=\Gamma(g_\infty,Z)=0\end{equation} in the sense of distribution as the condition \eqref{gsmall} makes $g_\infty =0$ a.e. outside a null set that results in the vanishing integral $\int \Gamma(g_\infty,Z)\phi$ via an integration by parts and we also have $\int LZ\phi$ vanishes as $Z=PZ$, for a test function $\phi\in C^1_c$.
    
    Now our main strategy is to show that $Z$ has to be zero by \eqref{I-P limit}, the specular reflection boundary conditions, \eqref{limit eq}, and the conservation laws \eqref{conservationlaws} and \eqref{conservationangcon}. On the other hand, we will show the strong convergence of $Z_n$ to $Z$ in $\int_0^1 \|\cdot \|_\sigma^2 ds$ by proving the compactness. This will lead us to a contradiction.
    
    We first introduce the following lemma which provides more information on the form of $Z$:
    \begin{lemma}[Lemma 6 of \cite{MR2679358}]\label{guolemma6}
    There exist constants $a_0,c_1,c_2$, and constant vectors $b_0,b_1$ and $\bar{w}$such that $Z(t,x,v)$ takes the form:
    \begin{multline*}
        \bigg(\left(\frac{c_0}{2}|x|^2-b_0\cdot x +a_0\right)+(-c_0tx-c_1x+\bar{w}\times x +b_0t+b_1)\times v\\ +\left(\frac{c_0t^2}{2}+c_1t+c_2\right)|v|^2\bigg)\sqrt{\mu}. 
    \end{multline*}Moreover, these constants are finite.
    \end{lemma}Our case also shares the same transport equation \eqref{limit eq} for $Z$ that deduces the same macroscopic equations as (72)-(76) of \cite{MR2679358} with $Z=PZ$ and the lemma holds. Moreover, a better bound \eqref{fnbounded} provides that the coefficients are finite.
    
    \subsection{Plan for the proof of the strong convergence}
    We first show the strong convergence of $Z_n$ to $Z$ in $\int_0^1 \|\cdot \|_\sigma^2 ds$. 
    First of all, we note that we have seen already that there is no concentration in time-boundary at $s=0$ or $s=1$ by \eqref{fnbounded}.
Then regarding the remainder of the domain $(\varepsilon,1-\varepsilon)\times \Omega\times \rth$ for some $\varepsilon>0$, we split it into three parts;  we define the interior $D^\varepsilon_{int}$, the non-grazing set $D^\varepsilon_{ng}$, and the singular grazing set $D^\varepsilon_{sg}$ so that 
$$(\varepsilon,1-\varepsilon)\times \Omega\times \rth=D^\varepsilon_{int}\cup D^\varepsilon_{lv}\cup D^\varepsilon_{ng}\cup D^\varepsilon_{sg}.$$ More precisely, we define the interior $D^\varepsilon_{int}$ as
    $$D^\varepsilon_{int} \eqdef (\varepsilon,1-\varepsilon)\times S_\varepsilon ,$$ where 
    $$S_\varepsilon = \left\{(x,v)\in \Omega\times \rth: \zeta(x)<-\varepsilon^4\text{ and }|v|\le \frac{4}{\varepsilon}\right\} .$$ 
    Then we define sets of the compliment.
    Firstly, define the set of large velocity $D^\varepsilon_{lv}$ as $$D^\varepsilon_{lv} \eqdef (\varepsilon,1-\varepsilon)\times \Omega\times \left\{|v|>\frac{4}{\varepsilon}\right\} \eqdef (\varepsilon,1-\varepsilon)\times S^c_{\varepsilon,0}.$$ 
    We define the singular grazing set 
    $D^\varepsilon_{sg}$ as $$D^\varepsilon_{sg} \eqdef (\varepsilon,1-\varepsilon)\times S^c_{\varepsilon,1} ,$$ where 
    $$S^c_{\varepsilon,1} = \left\{(x,v)\in \Omega\times \rth: \zeta(x)\ge -\varepsilon^4  \text{ and } \left[|n_x\cdot v| <\frac{\varepsilon}{2}\text { or } |v|> \frac{1}{\varepsilon}\right] \right\} .$$ 
Lastly, we define the non-grazing set 
    $D^\varepsilon_{ng}$ as $$D^\varepsilon_{ng} \eqdef (\varepsilon,1-\varepsilon)\times S^c_{\varepsilon,2} ,$$ where 
    $$S^c_{\varepsilon,2} = \left\{(x,v)\in \Omega\times \rth: \zeta(x)\ge -\varepsilon^4  \text{ and } \left[|n_x\cdot v| \ge\frac{\varepsilon}{2}\text { and } |v|\le \frac{1}{\varepsilon}\right] \right\} .$$ 
    Here recall that $\zeta(x) $ is the smooth function such that $\Omega =\{x:\zeta(x)<0\}.$

To prove the strong convergence in $\int_0^1 \|\cdot \|^2_\sigma ds,$ it suffices to show 
$$\sum_{1\le j\le 5} \int_0^1 ds \|\langle Z_n,e_j\rangle e_j - \langle Z,e_j\rangle e_j\|_{\sigma}^2  \rightarrow 0,$$ where $e_j$ are an orthonormal basis for 
$$\text{span}\{\sqrt{\mu},v\sqrt{\mu},|v|^2\sqrt{\mu}\},$$ as we have \eqref{I-P limit}.
Since $e_j(v)$ is smooth and the 0$^{th}$ and the 1$^{st}$ derivatives are exponentially decaying for large $|v|$, it suffices to prove 
$$\int_0^1 ds \int_\Omega dx\ |\langle Z_n,e_j\rangle - \langle Z,e_j\rangle |^2 \rightarrow 0.$$ We establish this by considering the decomposition of the domain as above.

\subsection{Interior compactness on $D^\varepsilon_{int}$}
Suppose $\chi_1$ is a smooth cutoff function that is supported on $D^\varepsilon_{int}$ and consider $$Z_n=(1-\chi_1)Z_n + \chi_1 Z_n.$$ In this subsection, we will consider the contribution $\chi_1 Z_n$ via the averaging lemma. We define another smooth cutoff function $\tilde{\chi}_1$ such that $\tilde{\chi}_1=1$ on $D^\varepsilon_{int}$ and $\tilde{\chi}_1=0$ outside $D^{\varepsilon/2}_{int}$. Then $\tilde{\chi}_1$ has a larger support than $\chi_1$ and $\tilde{\chi}_1 =1$ on $D^\varepsilon_{int}.$ The reason that we additionally define $\tilde{\chi}_1$ with a larger support than $\chi_1$ is in order to make $(1-\chi_1)Z_n=Z_n$ outside $D^\varepsilon_{int}$ and to make $\tilde{\chi}_1 Z_n =Z_n$ on $D^\varepsilon_{int}$.

We first observe that $\tilde{\chi}_1 Z_n$ satisfies the following equation
\begin{multline*}(\partial_t +v\cdot \nabla_x )(\tilde{\chi}_1 (1+|v|)^{-1/2}Z_n)\\=(1+|v|)^{-1/2}\left(-\tilde{\chi}_1 L[Z_n]+Z_n[\partial_t +v\cdot \nabla_x ]\tilde{\chi}_1 
+ \tilde{\chi}_1 \Gamma(g_n,Z_n)\right).\end{multline*}
We claim that the right-hand side is uniformly bounded in $L^2([0,1]\times \Omega\times \rth)$. 
We observe that the second term is easily uniformly bounded by the $L^2$ norm of $(1+|v|)^{-1/2}Z_n$, which is uniformly bounded by \eqref{fnbounded}. 
We also observe that the $L^2$ norms of the first and the third terms are bounded as follows.
By Lemma 1 of \cite{guo2002landau}, $\tilde{\chi}_1LZ_n$ can be written as
\begin{multline}\label{chiLZn}(1+|v|)^{-1/2}\tilde{\chi}_1LZ_n
\\=\bigg(-\partial_i(\sigma^{ij}\partial_jZ_n\tilde{\chi}_1)+\sigma^{ij}\partial_jZ_n\partial_i\tilde{\chi}_1-\partial_i\sigma^iZ_n\tilde{\chi}_1+\sigma^{ij}v_iv_jZ_n\tilde{\chi}_1\\
+\partial_i(\mu^{1/2}(\phi^{ij}\ast (\mu^{1/2}(\partial_j Z_n+v_jZ_n)))\tilde{\chi}_1)\\
-\mu^{1/2}(\phi^{ij}\ast (\mu^{1/2}(\partial_jZ_n+v_jZ_n)))\partial_i\tilde{\chi}_1\\
-v_i\mu^{1/2}(\phi^{ij}\ast (\mu^{1/2}(\partial_j Z_n+v_jZ_n)))\tilde{\chi}_1\bigg)(1+|v|)^{-1/2}
\equiv \partial_i g_1+g_2,\end{multline}where $\tilde{\chi}_1$ has a compact support and $g_1,g_2\in L^2([0,1]\times \Omega\times \rth)$ as $$\|  g_1\|_{L^2}+\|g_2\|_{L^2}\lesssim \|(I-P)Z_n\|_{\sigma}.$$ Also, we apply Lemma 7 at (56) of \cite{guo2002landau} to estimate $\tilde{\chi}_1\Gamma(g_n,Z_n)$ with $g_1$ there is our $g_n$ and $g_2=Z_n$ to see that
$$(1+|v|)^{-1/2}\tilde{\chi}_1\Gamma(g_n,Z_n)=\partial_{ij}g^{ij}+\partial_ig^i+g,$$ where
$$\|g^{ij}\|_{L^2}+\|g^i\|_{L^2}+\|g\|_{L^2}\lesssim \|g_n\|_{L^2}\|Z_n\|_\sigma\lesssim \|g_n\|_{L^\infty_m}\|Z_n\|_\sigma,$$as $m>\frac{3}{2}$ by the assumption \eqref{gsmall}.
Therefore, we have$$(\partial_t +v\cdot \nabla_x )(\tilde{\chi}_1(1+|v|)^{-1/2} Z_n)=h,$$ where $h\in L^2([0,1]\times \Omega; H^{-2}(\rth)).$ 
Then by the averaging lemma \cite[Theorem 5]{MR1003433}, we have
$$\langle \tilde{\chi}_1(1+|v|)^{-1/2} Z_n,e_j\rangle \in H^{1/6}([0,1]\times \Omega),$$ which holds uniformly in $n.$ Thus, up to a subsequence, we have the convergence
\begin{equation}\label{final1}\langle \tilde{\chi}_1(1+|v|)^{-1/2} Z_n,e_j\rangle \rightarrow \langle \tilde{\chi}_1 (1+|v|)^{-1/2}Z,e_j\rangle \text { in }L^2([0,1]\times \Omega).\end{equation}

    \subsection{Near the time-boundary and the grazing set $D^\varepsilon_{sg}$}Now, note that the leftover from the previous section is now
    $$\int_0^1 ds \int_\Omega dx |\langle (1-\chi_1)(Z_n-Z),e_j\rangle|^2  .$$
Regarding the contribution, we note that
\begin{multline}\label{final2}
   \int_0^1 ds \int_\Omega dx |\langle (1-\chi_1)|Z_n-Z|,e_j\rangle|^2\le  \int_0^1 ds \int_\Omega dx \int_\rth dv (1-\chi_1)^2|Z_n-Z|^2e_j \\
 = \left(\int_0^{\varepsilon}+\int_{1-\varepsilon}^1 \right)ds \int_{\Omega\times \rth}dxdv+\sum_{j=0}^2\int_0^1 ds    \int_{S^c_{\varepsilon,j}}dxdv.
\end{multline}
In this subsection, we only consider the contribution \begin{equation}\label{near the boundary ineq0}\left(\int_0^{\varepsilon}+\int_{1-\varepsilon}^1 \right)ds \int_{\Omega\times \rth}dxdv+\sum_{j=0}^1\int_0^1 ds    \int_{S^c_{\varepsilon,j}}dxdv,\end{equation}
 near the time-boundary and the grazing set $D^\varepsilon_{sg}$.

The first integral of \eqref{near the boundary ineq0} is bounded as\begin{multline*}
\left(\int_0^{\varepsilon}+\int_{1-\varepsilon}^1 \right)ds \int_{\Omega\times \rth}dxdv\ (1-\chi_1)^2|Z_n-Z|^2e_j
\\ \le 2\varepsilon\sup_{0\le s\le 1}(\|(1+|v|)^{-1/2}Z_n(s)\|^2_2+\|(1+|v|)^{-1/2}Z(s)\|^2_2).
\end{multline*} Note that we have the uniform boundedness
$$\sup_{0\le s\le 1,\ n\ge 1}\|(1+|v|)^{-1/2}Z_n(s)\|^2_{L^2}<C,$$ by \eqref{fnbounded} and that 
$\|(1+|v|)^{-1/2}Z(s)\|^2_2=\|(1+|v|)^{-1/2}Z(0)\|^2_2,$ by the transport equation \eqref{limit eq}. Then this rules out the possible concentration at $t=0$ or $t=1$. 

Regarding another term in \eqref{near the boundary ineq0},
we observe that
\begin{multline*}\int_0^1 ds    \int_{S^c_{\varepsilon,0}}dxdv\ (1-\chi_1)^2|Z_n-Z|^2e_j\\
\le \int_0^1 ds    \int_{\Omega}dx\int_{|v|\ge \frac{4}{\varepsilon}}dv\ (1+|v|)^{-1/2}(|Z_n|^2+|Z|^2)(1+|v|)^{2+1/2}\sqrt{\mu}.
\end{multline*}
Then for a sufficiently small $\varepsilon\ll 1$, we have
\begin{multline*}(1+|v|)^{2+1/2}\sqrt{\mu}\approx(1+|v|)^{2+1/2}\exp(-|v|^2/2)\lesssim \exp(-c|v|^2)\lesssim \exp\left(-\frac{16c}{\varepsilon^2}\right)\lesssim \varepsilon,\end{multline*} for some uniform constant $0<c<\frac{1}{2}.$
Therefore, we have
\begin{multline*}\int_0^1 ds    \int_{S^c_{\varepsilon,0}}dxdv\ (1-\chi_1)^2|Z_n-Z|^2e_j\\
	\le \int_0^1 ds    \int_{\Omega}dx\int_{|v|\ge \frac{4}{\varepsilon}}dv\ (1+|v|)^{-1/2}(|Z_n|^2+|Z|^2)(1+|v|)^{2+1/2}\sqrt{\mu}\\
	\lesssim \varepsilon \sup_{0\le s\le 1}(\|(1+|v|)^{-1/2}Z_n(s)\|^2_2+\|(1+|v|)^{-1/2}Z(s)\|^2_2).
\end{multline*} Note that we have the uniform boundedness
$$\sup_{0\le s\le 1,\ n\ge 1}\|(1+|v|)^{-1/2}Z_n(s)\|^2_{L^2}<C,$$ by \eqref{fnbounded} and that 
$\|(1+|v|)^{-1/2}Z(s)\|^2_2=\|(1+|v|)^{-1/2}Z(0)\|^2_2,$ by the transport equation \eqref{limit eq}.

On the other hand, for the other remainder term in \eqref{near the boundary ineq0}, we observe that
\begin{multline}\label{near the boundary ineq}
   \int_0^1 ds    \int_{S^c_{\varepsilon,1}}dxdv (1-\chi_1)^2|Z_n-Z|^2e_j\\
   \le \int_0^1 ds    \int_{S^c_{\varepsilon,1}}dxdv (1-\chi_1)^2\left(|(I-P)(Z_n-Z)|^2+|PZ_n-PZ|^2\right)e_j\\
   =\int_{S^c_{\varepsilon,1}}dxdv (1-\chi_1)^2\left(|(I-P)Z_n|^2+|PZ_n-PZ|^2\right)e_j,\end{multline} as $(I-P)Z=0.$ Note that by the additional exponential decay $e_j$ with respect to $|v|$, we have
$$\int_{S^c_{\varepsilon,1}}dxdv (1-\chi_1)^2|(I-P)Z_n|^2e_j\lesssim \|(I-P)Z_n\|_\sigma \lesssim \frac{1}{n}.$$In addition, we define
$$PZ_n=a_n(t,x)+\vec{b}_n(t,x)\cdot (e_2,e_3,e_4)+c_n(t,x)e_5,$$ and
$$PZ=a(t,x)+\vec{b}(t,x)\cdot (e_2,e_3,e_4)+c(t,x)e_5,$$ for $\{e_j\}_{j=1,...,5} $ is the orthonormal basis of $\text{span}\{\sqrt{\mu},v\sqrt{\mu},|v|^2\sqrt{\mu}\}.$ 
Note that $a_n$, $b_n$, $c_n$, $a$, $b$, and $c$ are functions of $t$ and $x$. Then, we observe that the remainder term satisfies
\begin{multline}
   \int_{S^c_{\varepsilon,1}}dxdv (1-\chi_1)^2|PZ_n-PZ|^2e_j\\\lesssim  \int_0^1 ds   \int_{\Omega\setminus \Omega_\varepsilon }dx \left(|a_n-a|^2+|\vec{b}_n-\vec{b}|^2 +|c_n-c|^2\right) \int_{ |v\cdot n_x| < \frac{\varepsilon}{2}\text{ or }|v|> \frac{1}{\varepsilon}}dv\ (1+|v|)^{l} \sqrt{\mu}\\
   \lesssim  \int_{ |v\cdot n_x| < \frac{\varepsilon}{2}\text{ or }|v|> \frac{1}{\varepsilon}}dv\ (1+|v|)^{l} \sqrt{\mu},
\end{multline}for some $l\ge 2$ by 
$$\int_0^1\|PZ_n\|^2_\sigma ds \approx \int_0^1 \left(\|a_n(s,\cdot)\|_2^2+\|\vec{b}_n(s,\cdot)\|_2^2+\|c_n(s,\cdot)\|_2^2\right)ds\lesssim 1,
$$and
$$\int_0^1\|PZ\|^2_\sigma ds \approx \int_0^1 \left(\|a(s,\cdot)\|_2^2+\|\vec{b}(s,\cdot)\|_2^2+\|c(s,\cdot)\|_2^2\right)ds\lesssim 1,
$$from the linear independency of $e_j$.
Then, if $|v|>\frac{1}{\varepsilon}$, then  $(1+|v|)^{l}\sqrt{\mu} \le C\varepsilon,\text{ for }|v|>\frac{1}{\varepsilon},$ if $\varepsilon$ is sufficiently smnall. On the other hand, if $|v\cdot n_x|<\frac{\varepsilon}{2},$ we have
$$\int_{|v\cdot n_x|<\frac{\varepsilon}{2}}dv \ (1+|v|)^l \sqrt{\mu} \lesssim \int_{-\frac{\varepsilon}{2}}^{\frac{\varepsilon}{2}}dv_{||}\int_{\mathbb{R}^2}dv_\perp e^{-|v_\perp|^2/8} \lesssim \varepsilon,$$ where $v_{||}\eqdef (n_x\cdot v)n_x,$ and $v_\perp = v-v_{||}\text { for }|n_x\cdot v|\le \frac{\varepsilon}{2}.$ 
Then the (LHS) of \eqref{near the boundary ineq} is bounded from above by
$$\int_0^1 ds    \int_{S^c_{\varepsilon,1}}dxdv (1-\chi_1)^2|Z_n-Z|^2e_j\lesssim \varepsilon.$$ 

\subsection{On the non-grazing set $D^\varepsilon_{ng}$} Finally, we are now left with the $L^2$ norm for the non-grazing set $D^\varepsilon_{ng}$ from \eqref{final2}
$$\int_0^1 ds    \int_{S^c_{\varepsilon,2}}dxdv\ (1-\chi_1)^2|Z_n-Z|^2e_j.$$In this subsection, we will prove that there is no concentration at the boundary, so that we can conclude that $Z_n$ converges strongly to $Z$ in $[0,1]\times \bar{\Omega}\times\rth.$ 
The main strategy in this section is to show that the non-grazing set part $\chi_\pm Z_n$ can be controlled by the inner boundary part $Z_n|_{\gamma_\varepsilon}$, which will be further controlled by the interior compactness. Here the inner boundary is defined as $\gamma^\varepsilon\eqdef \{x:\zeta(x)=-\varepsilon^4\}\times \rth.$
Now we fix $(s,x,v)\in D^\varepsilon_{ng}.$ Then we define backward/forward in time characteristic trajectories $\chi_\pm$ as
\begin{equation}\label{chipm}
    \begin{split}
        \chi_+(t,x,v)&=1_{\Omega\setminus \Omega_\varepsilon}(x-v(t-s))1_{\{|v|\le 1/\varepsilon,\ n_{x-v(t-s)}\cdot v >\varepsilon\}}(v) ,\text{ for }0\le t\le s,\\
        \chi_-(t,x,v)&=1_{\Omega\setminus \Omega_\varepsilon}(x-v(t-s))1_{\{|v|\le 1/\varepsilon,\ n_{x-v(t-s)}\cdot v <-\varepsilon\}}(v) ,\text{ for }0\le s\le t,
    \end{split}
\end{equation}where $\Omega_\varepsilon\eqdef \{x\in\Omega : \zeta(x)\le -\varepsilon^4\}.$ 
Note that $\chi_\pm$ solves the transport equation \newline $(\partial_t+~v\cdot~ \nabla_x)\chi_\pm =0$ with $$\chi_\pm(s,x,v)= 1_{\Omega\setminus \Omega_\varepsilon}(x)1_{\left\{|v|\le 1/\varepsilon,\ n_{x}\cdot v\lessgtr \pm\varepsilon\right\}}(v),$$ and it satisfies the following lemma:
\begin{lemma}[Lemma 10 of \cite{MR2679358}]\label{guolemma10}$\chi_\pm$ satisfies the followings:
 \begin{enumerate}
    \item For $0\le s-\varepsilon^2\le t\le s,$ if $\chi_+(t,x,v)\ne 0$ then $n_x\cdot v>\frac{\varepsilon}{2}>0$. Moreover, $\chi_+(s-\varepsilon^2,x,v)=0,$ for $\zeta(x)\ge-\varepsilon^4.$
    \item For $s\le  t \le s+\varepsilon^2\le 1,$ if $\chi_-(t,x,v)\ne 0,$ then $n_x\cdot v<-\frac{\varepsilon}{2}<0$. Moreover, $\chi_-(s+\varepsilon^2,x,v)=0$, for $\zeta(x)\ge-\varepsilon^4.$ 
\end{enumerate}
\end{lemma}
 We now observe that $\chi_\pm Z_n$ satisfies the following equation
$$(\partial_t +v\cdot \nabla_x )(\chi_\pm Z_n)=-\chi_\pm L[Z_n]
+ \chi_\pm \Gamma(g_n,Z_n).$$ 
We claim that $$\int_{S^c_{\varepsilon,2}} |Z_n|^2dxdv\lesssim \varepsilon,$$ if $n$ is sufficiently large. To see this, we first observe the $L^2$ estimate for $\chi_+$ part over $[s-\varepsilon^2,s]\times S^c_{\varepsilon,2}$ that for the inner boundary $\gamma^\varepsilon\eqdef \{x:\zeta(x)=-\varepsilon^4\}\times \rth$,
\begin{multline*}
    \|\chi_+Z_n(s)\|_{L^2(S^c_{\varepsilon,2})}^2+\int_{s-\varepsilon^2}^s\|\chi_+Z_n(t)\|^2_{\gamma_+}dt-\int_{s-\varepsilon^2}^s\|\chi_+Z_n(t)\|^2_{\gamma^\varepsilon_+}dt\\
    = \|\chi_+Z_n(s-\varepsilon^2)\|_{L^2(S^c_{\varepsilon,2})}^2+\int_{s-\varepsilon^2}^s\|\chi_+Z_n(t)\|^2_{\gamma_-}dt-\int_{s-\varepsilon^2}^s\|\chi_+Z_n(t)\|^2_{\gamma^\varepsilon_-}dt\\
    -2\int_{s-\varepsilon^2}^s(\chi_+ L[Z_n],\chi_+ Z_n)dt+2\int_{s-\varepsilon^2}^s(\chi_+ \Gamma(g_n,Z_n),\chi_+ Z_n)dt,
\end{multline*}where $(\cdot,\cdot)$ is the $L^2$ inner product on $S^c_{\varepsilon,2}$.
By Lemma \ref{guolemma10}, $\chi_+Z_n(s-\varepsilon^2)=0$. Also, $\chi_+Z_n=0$ on $\gamma_-$ and $\gamma_-^\varepsilon$ by the support condition of $\chi_+$. On the other hand, by \eqref{I-P limit} and Lemma 6 of \cite{guo2002landau}, we have
\begin{multline*}
    \int_{s-\varepsilon^2}^s(\chi_+ L[Z_n],\chi_+ Z_n)dt=\int_{s-\varepsilon^2}^s( L[Z_n],\chi^2_+ Z_n)dt\\
    \int_{s-\varepsilon^2}^s(L[(1-\chi^2_++\chi^2_+)Z_n],\chi^2_+ Z_n)dt=\int_{s-\varepsilon^2}^s( L[\chi^2_+Z_n],\chi^2_+ Z_n)dt,\end{multline*}
by the support condition of $\chi_+$. Thus,
\begin{multline*}
\int_{s-\varepsilon^2}^s(\chi_+ L[Z_n],\chi_+ Z_n)dt=\int_{s-\varepsilon^2}^s( L[\chi^2_+Z_n],\chi^2_+ Z_n)dt\\
\le C\int_{0}^1\|(I-P)\chi^2_+Z_n\|_\sigma^2dt\le C\int_{0}^1\|(I-P)Z_n\|_\sigma^2dt=\frac{C}{n}.
\end{multline*}Finally, we observe that, by Theorem 2.8 at (2.16) of \cite{2016arXiv161005346K}, \eqref{gsmall}, and \eqref{fnbounded}, we have
\begin{multline*}
    \int_{s-\varepsilon^2}^s(\chi_+ \Gamma(g_n,Z_n),\chi_+ Z_n)dt=\int_{s-\varepsilon^2}^s( \Gamma(g_n,Z_n),\chi_+^2 Z_n)dt
    \\\le C\|g_n\|_\infty \int_{s-\varepsilon^2}^s\|Z_n\|_\sigma\|\chi_+^2Z_n\|_\sigma dt\le C\|g_n\|_\infty \int_{s-\varepsilon^2}^s\|Z_n\|^2_\sigma dt\le \frac{C}{n}.
\end{multline*}
Altogether, we have
\begin{equation}
      \|\chi_+Z_n(s)\|_{L^2}^2+\int_{s-\varepsilon^2}^s\|\chi_+Z_n(t)\|^2_{\gamma_+}dt-\int_{s-\varepsilon^2}^s\|\chi_+Z_n(t)\|^2_{\gamma^\varepsilon_+}dt\le \frac{C}{n}.
\end{equation} Here, we note that by definition $$\chi_+Z_n(s,x,v)= 1_{\Omega\setminus \Omega_\varepsilon}(x)1_{\{|v|\le 1/\varepsilon,\ n_{x}\cdot v >\varepsilon\}}(v)Z_n(s,x,v).$$
  Similarly, we obtain for the part $\chi_-Z_n$
  \begin{equation}
      \|\chi_-Z_n(s)\|_{L^2}^2+\int^{s+\varepsilon^2}_s\|\chi_-Z_n(t)\|^2_{\gamma_-}dt-\int^{s+\varepsilon^2}_s\|\chi_-Z_n(t)\|^2_{\gamma^\varepsilon_-}dt\le \frac{C}{n}.
\end{equation}
Altogether, we have
\begin{equation}\label{19}
      \|Z_n(s)\|_{L^2(S^c_{\varepsilon,2})}^2
     \le\int_{s-\varepsilon^2}^s\|\chi_+Z_n(t)\|^2_{\gamma^\varepsilon_+}dt+\int^{s+\varepsilon^2}_s\|\chi_-Z_n(t)\|^2_{\gamma^\varepsilon_-}dt + \frac{C}{n}.
\end{equation}

Now we will prove that the right-hand side of \eqref{19} can be arbitrarily small by showing that the right-hand side can further be bounded via the interior compactness inside $S_\varepsilon$. 
In order to control the trace norm on the non-grazing set, we are going to derive a trace theorem for the Landau equation to $1_{\{|v|\le \frac{1}{\varepsilon}\}}(Z_n-Z)$ over the domain $\bar{S}_\varepsilon.$  
We first consider the estimate for $t\in (s-\varepsilon^2,s)$. Recall that $\chi_+$ from \eqref{chipm} indeed satisfies 
\begin{equation}\label{chi+prop}
	\begin{split}
	\partial_t \chi_+ +v\cdot \nabla_x \chi_+&=0,\\
	\chi_+(s-\varepsilon^2,x,v)&=0 \text{ for }\mathrm{dist}(x,\partial\Omega_\varepsilon)\le \varepsilon,
	\end{split}
\end{equation}where $\Omega_\varepsilon\eqdef \{x\in\Omega : \zeta(x)=-\varepsilon^4\}.$ 
We choose a smooth cutoff function $\chi_b=\chi_b^\varepsilon(x)$ near $\partial\Omega_\varepsilon$ such that $\chi_b\equiv 1\text{ if }\mathrm{dist}(x,\partial\Omega_\varepsilon)\le \frac{\varepsilon^4}{4}$,
$\chi_b\equiv 0\text{ if }\mathrm{dist}(x,\partial\Omega_\varepsilon)\ge \varepsilon^4,$ and the growth is up to $|\nabla_x\chi_b|\lesssim \varepsilon^{-3/2}.$ We also choose a smooth cutoff function $\chi_2=\chi_2(v)$ such that $\chi_2 = 1 $ for $|v|\le \frac{1}{\varepsilon}$ and $=0$ for $|v|\ge \frac{4}{\varepsilon}$ and \begin{equation}\label{chi2cutoff}|\chi_+\chi_2|+|\nabla_v(\chi_+ \chi_2)|+|\nabla^2_v (\chi_+\chi_2)|\lesssim \mu\left(\frac{|v|}{4}\right).\end{equation}
Note that $\chi_2(v)$ has a larger support than $1_{|v|\le \frac{1}{\varepsilon}}$. 
We then take $\bar{\chi}=\chi_2\chi_b\chi_+,$ such that $\bar{\chi}(s-\varepsilon^2,x,v)=0$  for $\mathrm{dist}(x,\partial\Omega_\varepsilon)\le \varepsilon$ and
$$(\partial_t+v\cdot \nabla_x)\bar{\chi}=\chi_+\chi_2v\cdot\nabla_x \chi_b.$$ 
Now consider the following rearranged equation \eqref{rearrange Landau} for this argument:$$
\partial_t Z_n+v\cdot\nabla_{x}Z_n=\nabla_{v}\cdot(\sigma_{G_n}\nabla_{v}Z_n )+a_{g_n}\cdot\nabla_{v}Z_n+\bar{K}_{g_n}Z_n,$$where $G_n=\mu+\sqrt{\mu}g_n$.
Then, note that $\bar{\chi}Z_n$ satisfies the equation
  \begin{multline}(\partial_t+v\cdot \nabla_x )(\bar{\chi}Z_n)=\chi_2\chi_+Z_nv\cdot\nabla_x\chi_b+\nabla_{v}\cdot(\sigma_{G_n}\nabla_{v}(\bar{\chi}Z_n))\\
-\sigma_{G_n}Z_n\Delta_v\bar{\chi}-2\sigma_{G_n}\nabla_vZ_n\cdot \nabla_v\bar{\chi}-Z_n\nabla_v(\sigma_{G_n})\cdot \nabla_v\bar{\chi}\\ +\bar{\chi}a_{g_n}\cdot\nabla_{v}Z_n
+\bar{K}_{g_n} (\bar{\chi}Z_n) . \end{multline}
  We multiply $\bar{\chi}Z_n$ and integrate on $(s-\varepsilon^2,s)\times S_\varepsilon$ to obtain that
  \begin{multline}
  \frac{1}{2}\left(\|\bar{\chi}Z_n(s)\|^2_{L^2(S_\varepsilon)}-\|\bar{\chi}Z_n(s-\varepsilon^2)\|^2_{L^2(S_\varepsilon)}\right)+\int_{s-\varepsilon^2}^{s}dt\|\bar{\chi}Z_n\|^2_{\gamma^\varepsilon}\\
= -\int_{s-\varepsilon^2}^{s}dt \iint_{S_\varepsilon} dxdv\ \sigma_{G_n}|\nabla_{v}(\bar{\chi}Z_n)|^2\\+\int_{s-\varepsilon^2}^{s}dt \iint_{S_\varepsilon} dxdv\ \bigg[ \bar{\chi}Z_n\bigg(\chi_2\chi_+Z_nv\cdot\nabla_x\chi_b
-\sigma_{G_n}Z_n\Delta_v\bar{\chi}-2\sigma_{G_n}\nabla_vZ_n\cdot \nabla_v\bar{\chi}\\-Z_n\nabla_v(\sigma_{G_n})\cdot \nabla_v\bar{\chi} +\bar{\chi}a_{g_n}\cdot\nabla_{v}Z_n
+\bar{K}_{g_n} (\bar{\chi}Z_n)\bigg)\bigg],
 \end{multline}
by the integration by parts. Note that $\bar{\chi}Z_n=0$ on $\gamma_-^\varepsilon$ by the support condition of $\chi_+$. By \eqref{chi+prop} and the support condition of $\chi_b$, we also have $\bar{\chi}Z_n(s-\varepsilon^2)=0.$ Thus, we have \begin{multline}
\frac{1}{2}\|\bar{\chi}Z_n(s)\|^2_{L^2(S_\varepsilon)}+\int_{s-\varepsilon^2}^{s}dt\|\bar{\chi}Z_n\|^2_{\gamma^\varepsilon_+}+\int_{s-\varepsilon^2}^{s}dt \iint_{S_\varepsilon} dxdv\ \sigma_{G_n}|\nabla_{v}(\bar{\chi}Z_n)|^2\\
=\int_{s-\varepsilon^2}^{s}dt \iint_{S_\varepsilon} dxdv\ \bigg[ \bar{\chi}Z_n\bigg(\chi_2\chi_+Z_nv\cdot\nabla_x\chi_b
-\sigma_{G_n}Z_n\Delta_v\bar{\chi}-2\sigma_{G_n}\nabla_vZ_n\cdot \nabla_v\bar{\chi}\\-Z_n\nabla_v(\sigma_{G_n})\cdot \nabla_v\bar{\chi} +\bar{\chi}a_{g_n}\cdot\nabla_{v}Z_n
+\bar{K}_{g_n} (\bar{\chi}Z_n)\bigg)\bigg]
, \end{multline}
We estimate the upper bound of each term of the right-hand side.
We first observe that 
\begin{multline*}\left|\int_{s-\varepsilon^2}^{s}dt \iint_{S_\varepsilon} dxdv\  \bar{\chi}Z_n\chi_2\chi_+Z_nv\cdot\nabla_x\chi_b\right|\\\lesssim\int_{s-\varepsilon^2}^{s}dt \iint_{S_\varepsilon} dxdv\ \mu\left(\frac{|v|}{4}\right)|Z_n|^2|\nabla_x\chi_b|\\ \lesssim\varepsilon^{-3/2}\int_{s-\varepsilon^2}^{s}dt\ \|(1+|v|)^{-1/2}Z_n\|^2_{L^2(S_\varepsilon)},\end{multline*}
by the assumption of $\chi_b$. 
Also, by\eqref{Eq : bar K}, Lemma 3 and Lemma 6 of \cite{guo2002landau}, we have
\begin{multline}\left|\int_{s-\varepsilon^2}^{s}dt \iint_{S_\varepsilon} dxdv\  \bar{\chi}Z_n\bar{K}_{g_n} (\bar{\chi}Z_n)\right|\\\le \int_{s-\varepsilon^2}^{s}dt\  (\eta \|\bar{\chi} Z_n\|_{\sigma}+C_\eta \|\bar{\chi}Z_n\|_{L^2(S_\varepsilon)})\|(\bar{\chi}Z_n)\|_\sigma
\\
\le  \int_{s-\varepsilon^2}^{s}dt\  (\eta' \| Z_n\|^2_{\sigma}+C_{\eta'} \|(1+|v|)^{-1/2}Z_n\|^2_{L^2(S_\varepsilon)}),\end{multline} for a sufficiently small $\eta'$
by Young's inequality. 
 We also note that by Lemma 3 of \cite{guo2002landau}, we have
$\sigma^{ij}v_iv_j =\lambda_1 |v|^2,$ where $\lambda_1 \approx (1+|v|)^{-3}.$ Therefore,$$\iint_{S_\varepsilon}dxdv\ |\sigma^{ij}v_iv_j(\bar{\chi}Z_n)^2|\le \iint_{S_\varepsilon}dxdv\ \frac{(\bar{\chi}Z_n)^2}{1+|v|}.$$ 
Here, note that by Lemma 3 of \cite{guo2002landau} and Lemma 2.4 of \cite{2016arXiv161005346K}, if $n$ is sufficiently large so that $\|g_n\|_{L^\infty}\ll 1$, then $$\sigma^{ij}\partial_i(\bar{\chi}Z_n)\partial_j(\bar{\chi}Z_n)\approx \sigma^{ij}_{G_n}\partial_i(\bar{\chi}Z_n)\partial_j(\bar{\chi}Z_n),$$ where $G_n=\mu+\sqrt{\mu}g_n$. 
Then, by \eqref{chi2cutoff}, Lemma 2.4 of \cite{2016arXiv161005346K} and Lemma 3 of \cite{guo2002landau}, we have
\begin{multline*} \left|\int_{s-\varepsilon^2}^{s}dt \iint_{S_\varepsilon} dxdv\  (\bar{\chi}Z_n)\sigma_{G_n}Z_n\Delta_v\bar{\chi}\right|\\ \lesssim \int_{s-\varepsilon^2}^{s}dt \iint_{\zeta<-\varepsilon^4\text{ and }|v|\le \frac{4}{\varepsilon}} dxdv\ \mu\left(\frac{|v|}{4}\right) \frac{|Z_n|^2}{1+|v|}
\\\lesssim \int_{s-\varepsilon^2}^{s}dt \ \|(1+|v|)^{-1/2}Z_n\|^2_{L^2(S_\varepsilon)}.
\end{multline*}
Similarly, we have
\begin{multline*} \left|\int_{s-\varepsilon^2}^{s}dt \iint_{S_\varepsilon} dxdv\  2(\bar{\chi}Z_n)\sigma_{G_n}\nabla_v Z_n\cdot\nabla_v\bar{\chi}\right|\\ \lesssim \int_{s-\varepsilon^2}^{s}dt \iint_{\zeta<-\varepsilon^4\text{ and }|v|\le \frac{4}{\varepsilon}} dxdv\ \mu\left(\frac{|v|}{4}\right)\sigma_{G_n} |Z_n ||\nabla_vZ_n|\\
 \lesssim\eta \int_{s-\varepsilon^2}^{s}dt \iint_{S_\varepsilon} dxdv\ \sigma_{G_n}  |\nabla_vZ_n|^2+C_\eta\int_{s-\varepsilon^2}^{s}dt \iint_{S_\varepsilon} dxdv\ \mu\left(\frac{|v|}{2}\right)\sigma_{G_n} |Z_n |^2\\ \lesssim\eta \int_{s-\varepsilon^2}^{s}dt \iint_{S_\varepsilon} dxdv\ \sigma_{G_n}  |\nabla_vZ_n|^2+C_\eta\int_{s-\varepsilon^2}^{s}dt \iint_{S_\varepsilon} dxdv\ \mu\left(\frac{|v|}{2}\right)\frac{|Z_n |^2}{1+|v|}\\
 \lesssim\eta \int_{s-\varepsilon^2}^{s}dt \iint_{S_\varepsilon} dxdv\ \sigma_{G_n}  |\nabla_vZ_n|^2+C_\eta\int_{s-\varepsilon^2}^{s}dt \ \|(1+|v|)^{-1/2}Z_n\|^2_{L^2(S_\varepsilon)},
\end{multline*} for any small $\eta>0$ by Young's inequality.
In addition, we have
\begin{multline*} \left|\int_{s-\varepsilon^2}^{s}dt \iint_{S_\varepsilon} dxdv\ (\bar{\chi}Z_n) Z_n\nabla_v\sigma_{G_n}\cdot \nabla_v\bar{\chi}\right|\\
\lesssim \int_{s-\varepsilon^2}^{s}dt \iint_{\zeta<-\varepsilon^4\text{ and }|v|\le \frac{4}{\varepsilon}} dxdv\ \mu\left(\frac{|v|}{4}\right) \frac{|(\bar{\chi}Z_n) ||Z_n|}{(1+|v|)^2}\\\lesssim\int_{s-\varepsilon^2}^{s}dt \ \|(1+|v|)^{-1/2}Z_n\|^2_{L^2(S_\varepsilon)}.
\end{multline*} Also, by \eqref{chi2cutoff} and the definition of $a_{g_n}$ from \eqref{Eq : bar A}, we observe that
\begin{multline*} \left|\int_{s-\varepsilon^2}^{s}dt \iint_{S_\varepsilon} dxdv\ (\bar{\chi}Z_n)\bar{\chi} a_{g_n}\cdot\nabla_{v}Z_n\right|\\
\lesssim\int_{s-\varepsilon^2}^{s}dt \iint_{S_\varepsilon} dxdv\ |Z_n||\nabla_{v}Z_n|\left(|\phi^{ij}\ast (v_i\mu^{1/2}g_n)| +|\phi^{ij}\ast (\mu^{1/2}\partial_j g_n)|\right)\\
\lesssim\int_{s-\varepsilon^2}^{s}dt \iint_{S_\varepsilon} dxdv\ |Z_n||\nabla_{v}Z_n|\left(2|\phi^{ij}\ast (\mu^{1/4}g_n)| +|\partial_j\phi^{ij}\ast (\mu^{1/2} g_n)|\right)\\
\lesssim \|g_n\|_{L^\infty}\int_{s-\varepsilon^2}^{s}dt \iint_{\zeta<-\varepsilon^4\text{ and }|v|\le \frac{4}{\varepsilon}} dxdv\ \mu\left(\frac{|v|}{4}\right) \frac{|Z_n||\nabla_{v}Z_n|}{(1+|v|)}\\\lesssim \eta\int_{s-\varepsilon^2}^{s}dt\ \|Z_n\|^2_{\sigma}+\frac{C_\eta}{n^2}\int_{s-\varepsilon^2}^{s}dt\ \|(1+|v|)^{-1/2}Z_n\|^2_{L^2(S_\varepsilon)},\end{multline*}for a sufficiently small $\eta>0$ by Young's inequality.
Altogether, we have\begin{multline*}
     \int_{s-\varepsilon^2}^s\|\chi_+Z_n(t)\|^2_{\gamma^\varepsilon_+}dt
  \\ \lesssim (C_\eta+\varepsilon^{-3/2}) \int_{s-\varepsilon^2}^{s}\left\|(1+|v|)^{-1/2}Z_n\right\|^2_{L^2(S_{\varepsilon})}dt +\eta \int_{s-\varepsilon^2}^{s}dt\ \|Z_n\|^2_\sigma\\ \lesssim (C_\eta+\varepsilon^{-3/2}) \int_{s-\varepsilon^2}^{s}\left\|(1+|v|)^{-1/2}Z\right\|^2_{L^2(S_{\varepsilon})}dt\\+(C_\eta+\varepsilon^{-3/2}) \int_{s-\varepsilon^2}^{s}\left\|(1+|v|)^{-1/2}(Z_n-Z)\right\|^2_{L^2(S_{\varepsilon})}dt+\eta \int_{s-\varepsilon^2}^{s}dt\ \|Z_n\|^2_\sigma,
\end{multline*} for any small $\eta>0$. We repeat the same argument for the part $\int^{s+\varepsilon^2}_s\|\chi_-Z_n(t)\|^2_{\gamma^\varepsilon_-}dt$ of \eqref{19} using $\chi_-$, instead of $\chi_+$. Note that, by the interior compactness, we have for a fixed $\varepsilon>0$
$$\lim_{n\rightarrow \infty}\int_{s-\varepsilon^2}^{s}\left\|(1+|v|)^{-1/2}(Z_n-Z)\right\|^2_{L^2(S_{\varepsilon})}dt=0.$$
 Then, by \eqref{19},
 we have for a small $\eta \sim\sqrt{\varepsilon}$ such that $C_\eta\lesssim \varepsilon^{-3/2}$ and for a sufficiently large $n>0$, \begin{multline*}
      \|Z_n(s)\|_{L^2(S^c_{\varepsilon,2})}^2\lesssim (C_\eta+\varepsilon^{-3/2}) \int_{s-\varepsilon^2}^{s+\varepsilon^2}\left\|(1+|v|)^{-1/2}Z\right\|^2_{L^2(S_{\varepsilon})}dt\\+(C_\eta+\varepsilon^{-3/2}) \int_{s-\varepsilon^2}^{s+\varepsilon^2}\left\|(1+|v|)^{-1/2}(Z_n-Z)\right\|^2_{L^2(S_{\varepsilon})}dt +\eta \int_{s-\varepsilon^2}^{s+\varepsilon^2}dt\ \|Z_n\|^2_\sigma+ \frac{C}{n}\\\lesssim 2(C_\eta+\varepsilon^{-3/2})\varepsilon^2  \sup_{t\in [0,1]}\left\|(1+|v|)^{-1/2}Z(t)\right\|^2_{L^2(S_{\varepsilon})}\\+(C_\eta+\varepsilon^{-3/2})\varepsilon^2+\eta \int_{s-\varepsilon^2}^{s+\varepsilon^2}dt\ \|Z_n\|^2_\sigma+ \frac{C}{n}
      \lesssim C'\sqrt{\varepsilon},
\end{multline*} by \eqref{fnbounded} where $C'>0$ depends on $a_0,c_0,c_1,c_2,b_0,b_1,$ and $\bar{w}$ of Lemma \ref{guolemma6}. Therefore, for any small $\varepsilon>0$, we have\begin{equation}\label{final3} \|Z_n(s)\|_{L^2(S^c_{\varepsilon,2})}^2\lesssim C'\sqrt{\varepsilon},\end{equation} for large $n$.
\subsection{Strong convergence and the non-zero $PZ$}By \eqref{final1}, \eqref{final2}, \eqref{near the boundary ineq0}, \eqref{fnbounded}, \eqref{near the boundary ineq}, and \eqref{final3}, we obtain 
$$\int_0^1 ds \int_\Omega dx\ |\langle Z_n,e_j\rangle - \langle Z,e_j\rangle |^2 \rightarrow 0,$$ where $e_j$ are an orthonormal basis for 
$\text{span}\{\sqrt{\mu},v\sqrt{\mu},|v|^2\sqrt{\mu}\}.$ Since $e_j(v)$ is smooth and the 0$^{th}$ and the 1$^{st}$ derivatives are exponentially decaying for large $|v|$, we obtain that
$$\sum_{1\le j\le 5} \int_0^1 ds \|\langle Z_n,e_j\rangle e_j - \langle Z,e_j\rangle e_j\|_{\sigma}^2  \rightarrow 0.$$ Finally, note that $$Z_n=\sum_{1\le j\le 5}\langle Z_n,e_j\rangle e_j+(I-P)Z_n,$$ and we have \eqref{I-P limit}. Therefore, we obtain the strong convergence of $Z_n$ to $Z$ in $\int_0^1 ds \|\cdot \|^2_{\sigma}$, and we have
$$\int_0^1 ds\  \|PZ \|^2_{\sigma}=1.$$
Also, recall that the specular reflection condition for $Z_n$ is $Z_n(t,x,v)=Z_n(t,x,R_x(v))$. By taking $n\rightarrow \infty$, we can observe that
$Z$ satisfies the same condition for $|v\cdot n_x|\ge \varepsilon/2.$ By continuity of $Z$, we obtain $Z(t,x,v)=Z(t,x,R_x(v))$.
\subsection{ $Z$ is indeed zero.}
On the other hand, we show below that $PZ$ is indeed zero, which will lead us to a contradiction. The proof will be done via the use of the specular boundary conditions, \eqref{limit eq}, and the conservation laws \eqref{conservationlaws} and \eqref{conservationangcon}.
 Recall that, by the conservation laws \eqref{conservationlaws}, we first obtain 
 $$\int Z\sqrt{\mu}=\int Z|v|^2\sqrt{\mu}=0.$$
On the other hand, Lemma \ref{guolemma6} implies that, for any $s\in[0,1]$, we obtain the conservation laws in the form of
\begin{equation}
      \int\left(\left(\frac{c_0}{2}|x|^2-b_0\cdot x +a_0\right)+\left(\frac{c_0t^2}{2}+c_1s+c_2\right)|v|^2\right)\sqrt{\mu}=0,
  \end{equation}
and \begin{equation}
      \int\left(\left(\frac{c_0}{2}|x|^2-b_0\cdot x +a_0\right)|v|^2+\left(\frac{c_0t^2}{2}+c_1s+c_2\right)|v|^4\right)\sqrt{\mu}=0.
  \end{equation}
This implies $c_0=c_1=0$. Also, by the specular reflection condition that $Z(s,x,v)= Z(s,x,R_x(v)),$ we have for any $x\in \partial\Omega$ that $$b\cdot n_x =0 \text{ or } (\bar{w}\times x +b_0s+b_1)\cdot n_x= 0.$$First of all, the coefficient $b_0$ of the time-variable $s$ is zero, which gives
\begin{equation}\label{a4}
    b\cdot n_x =0 \text{ or } (\bar{w}\times x +b_1)\cdot n_x= 0.
\end{equation}
If $\bar{w}=0$, then $b_1\cdot n_x =0$ on $\partial\Omega$. Then we can choose a point $x'\in \partial\Omega$ such that $b_1\ \|\ n_{x'}$ via taking the minimizer of $\min_{\zeta(x)}b_1\cdot x$. Then this gives $b_1\cdot n_{x'}=0$ and $b_1=0.$
If $\bar{w}\ne 0,$ then we decompose $b_1$ as 
$$ b_1=\beta_1 \frac{\bar{w}}{|\bar{w}|}+\beta_2\eta,$$ where $|\eta|=1$ and $\eta\perp \bar{w}.$ Then $$\eta=\left(\frac{\bar{w}}{|\bar{w}|}\times \eta\right)\times \frac{\bar{w}}{|\bar{w}|}.$$ Therefore, we get
$$ b_1=\beta_1 \frac{\bar{w}}{|\bar{w}|}+\beta_2\left(\frac{\bar{w}}{|\bar{w}|}\times \eta\right)\times \frac{\bar{w}}{|\bar{w}|}=\beta_1 \frac{\bar{w}}{|\bar{w}|}-x_0\times \bar{w},$$where $x_0=-\beta_2\left(\frac{\bar{w}}{|\bar{w}|}\times \eta\right)\frac{1}{|\bar{w}|}$. Therefore, by \eqref{a4} we have
$$\beta_1 \frac{\bar{w}}{|\bar{w}|}n_x +((x-x_0)\times \bar{w})\cdot n_x =0.$$
Now note that we can choose a point $x'\in \partial\Omega$ such that $\bar{w}\ \|\ n_{x'}$. Then we deduce $\bar{w}\times (n_{x'}\times (x'-x_0)=0$ and obtain $\beta_1=0.$ Therefore, we obtain 
$$Z=\bar{w}\times (x-x_0)\cdot v\sqrt{\mu}$$ and  $\bar{w}\times(x-x_0)\cdot n_x=0.$ 
If $\Omega$ is not rotationally symmetric, then no nonzero $\bar{w}$ and $x_0$ exist, which provides $Z=0$ from the former case that $\bar{w}=0$. If $\Omega$ is indeed rotationally symmetric and there are nonzero $\bar{w}$ and $x_0$ such that 
$$Z=\bar{w}\times (x-x_0)\cdot v\sqrt{\mu}\text{  and  }\bar{w}\times(x-x_0)\cdot n_x=0.$$ Now we use the conservation of total angular momentum \eqref{conservationangcon} that
$$\int_{\Omega\times \rth}((x-x_0)\times \bar{w})\cdot Zv\sqrt{\mu}dxdv =0,$$ which is equivalent to say
$$\int_{\Omega\times \rth}(\bar{w}\times (x-x_0)\cdot v)^2\mu dxdv =0.$$ 
Therefore, $ \bar{w}\times (x-x_0)\cdot v=0$. Thus we conclude that $Z=0$ and this leads to a contradiction.

\end{proof}
  This finishes the proof for the positivity on a fixed time interval $[0,1]$. In the next section, we prove the main $L^2$ decay theorem in the interval $[0,t]$.

\section{Proof of Theorem \ref{linear l2}}
We are now ready to prove our main theorem on the $L^2$ decay estimates for the solutions $f$ to \eqref{linear}.
\begin{proof}[Proof of Theorem \ref{linear l2}]
Define \begin{equation}\label{T}T=\sup_t \left(t:\sup_{0\le s\le t}\mathcal{E}_\vartheta (f(s))\le 1\right)>0,\end{equation} for some $\vartheta\ge 0.$
For $0\le t\le T,$ let $0\le N\le t\le N+1,$ for some non-negative integer $N$. We split $[0,t]=\left(\cup_{j=0}^{N-1}[j,j+1]\right)\cup [N,t].$
	On each interval $[j,j+1]$ for $j=0,1,...,N-1,$ we define 
	$f^j(s,x,v)\eqdef f(s+j,x,v).$
Then clearly $f^j(s,x,v)$ is a weak solution of \eqref{linear}-\eqref{conservationangcon} on the time interval $s\in[0,1]$ with the new initial condition $f^j(0,x,v)= f(j,x,v)$. Note that since we only consider $t\in [0,T]$ for $T$ from \eqref{T}, $\mathcal{E}_\vartheta(f^j(0))$ is uniformly bounded from above. 
We take the $L^2$ energy estimate over $0\le s\le N$ to obtain 
$$
\|f(N)\|_{2}^2 +\int_0^N ds\ \left(Lf,f\right)
=\|f(0)\|^2_{2} +\int_0^N ds\ (\Gamma(g,f),f),
$$by the specular reflection boundary condition. Equivalently, we have
$$
\|f(N)\|_{2}^2 +\sum_{j=0}^{N-1}\int_0^1 ds\ \left(Lf^j,f^j\right)
=\|f(0)\|^2_{2} +\int_0^N ds\ (\Gamma(g,f),f).
$$Then we use Proposition \ref{main coercivity} and obtain 
$$
\|f(N)\|_{2}^2 +\sum_{j=0}^{N-1}\delta_{\epsilon,j}\int_0^1 ds\ \|f^j\|^2_\sigma 
\le \|f(0)\|^2_{2} +\int_0^N ds\ (\Gamma(g,f),f).
$$ Thus,
\begin{equation}\label{0N ineq}
\|f(N)\|_{2}^2 +\min_{\{j=0,...,N-1\}}\delta_{\epsilon,j}\int_0^N ds\ \|f\|^2_\sigma 
\le \|f(0)\|^2_{2} +\int_0^N ds\ (\Gamma(g,f),f).
\end{equation}
By Theorem 2.8 of \cite{2016arXiv161005346K}, we obtain the energy inequality over $[0,N]$
\begin{equation}\label{0t ineq}
	\|f(N)\|_{2}^2 +\min_{\{j=0,...,N-1\}}\delta_{\epsilon,j}\int_0^N ds\ \|f(s)\|^2_\sigma 
	\le \|f(0)\|^2_{2} +C_0\int_0^N ds\ \|g(s)\|_\infty \|f(s)\|_{\sigma}^2.
\end{equation}
This completes the derivation of the energy inequality for the base case $\vartheta=0$ in the interval $[0,N]$. 
For $\vartheta\ge 0,$ we multiply $(1+|v|)^{2\vartheta}(v) f(t,x,v)$ and take the $L^2$ energy estimate over $0\le s\le N$ to obtain 
$$
	\|f(N)\|_{2,\vartheta}^2 +\int_0^N ds\ \left((1+|v|)^{2\vartheta}Lf,f\right)
=\|f(0)\|^2_{2,\vartheta} +\int_0^N ds\ ((1+|v|)^{2\vartheta}\Gamma(g,f),f),
$$by the specular reflection boundary condition. 
By Lemma 2.7 and Theorem 2.8 of \cite{2016arXiv161005346K}, we have for some $C_\vartheta>0$
\begin{multline}\label{vartheta energy}
\|f(N)\|_{2,\vartheta}^2 +\int_0^Nds\ \left(\frac{1}{2}\|f(s)\|_{\sigma,\vartheta}^2 -C_\vartheta \|f(s)\|^2_\sigma\right)
\\ \le \|f(0)\|^2_{2,\vartheta} +C_\vartheta \int_0^N ds\  \|g(s)\|_\infty \|f(s)\|_{\sigma,\vartheta}^2.
\end{multline}
This completes the derivation of the energy inequality for $\vartheta\ge 0$ in the interval $[0,N].$ Therefore, by the ingredients \eqref{0t ineq} for the base case $\vartheta=0$ and \eqref{vartheta energy} for a general $\vartheta\ge 0,$ we obtain (4.36) of \cite{2016arXiv161005346K} by the same proof via the induction on $\vartheta$ for $\eta \equiv 0$, $s=0$ and $t=N$. Then by the same proof of Theorem 1.2 of \cite{2016arXiv161005346K}, we obtain \eqref{Eq : energy estimate linear} and \eqref{Eq : decay estimate linear} in the time interval $s\in [0,N]$;
for any $\vartheta\in 2^{-1}\mathbb{N} \cup\{0\}$ and  $k\in\mathbb{N}$, there exist $C$ and $\epsilon=\epsilon(\vartheta)>0$ such that 
\begin{equation}\notag
	\sup_{0 \le s\le N}\mathcal{E}_{\vartheta}(f(s)) \le C 2^{2\vartheta} \mathcal{E}_{\vartheta}(f_0),
\end{equation}
and
\begin{equation}	\notag
	\|f(N)\|_{2,\vartheta} \le C_{\vartheta,k} \left(\mathcal{E}_{\vartheta+\frac{k}{2}}(0) \right)^{1/2}\left(  1+ \frac{N}{k}\right)^{-k/2}.%
\end{equation}
Now we consider the local interval $[N,t]$ where we have $0\le t-N\le 1$ and $t\le T$. We recall that if $\|g\|_{L^\infty_m}\le \epsilon$ for a sufficiently small $\epsilon,$ we have
\begin{equation}\label{local ineq}\|(1+|v|)^\vartheta f(t)\|_{L^2}^2+\int_{N}^t \|f(s)\|^2_{\sigma,\vartheta}\  ds\le C e^{t-N} \|(1+|v|)^\vartheta f(N)\|^2_{L^2}, \end{equation}
by \eqref{fnbounded} for $l=\vartheta$ on $[N,t]$. Note that \eqref{local ineq} holds for a solution to \eqref{linear} under \eqref{gsmall ep} and \eqref{ib}-\eqref{conservationangcon} by the local $L^2$ energy inequality and the Gr\"onwall inequality as in \eqref{fnbounded} and we do not need the additional assumption \eqref{contradiction} for \eqref{fnbounded}. Then we observe that \begin{equation}\notag
\mathcal{E}_{\vartheta}(f(t))\le Ce^{t-N}\mathcal{E}_{\vartheta}(f(N))\le C'e^{t-N} 2^{2\vartheta} \mathcal{E}_{\vartheta}(f_0)\le C'e 2^{2\vartheta} \mathcal{E}_{\vartheta}(f_0),
\end{equation}for some $C'>0$ 
and
\begin{multline*}	\notag
\|f(t)\|_{2,\vartheta}\le Ce^{t-N}	\|f(N)\|_{2,\vartheta} \le Ce^{t-N}C_{\vartheta,k} \left(\mathcal{E}_{\vartheta+\frac{k}{2}}(0) \right)^{1/2}\left(  1+ \frac{N}{k}\right)^{-k/2}\\
\le CeC_{\vartheta,k} \left(\mathcal{E}_{\vartheta+\frac{k}{2}}(0) \right)^{1/2}2^{k/2}\left(  1+ \frac{t}{k}\right)^{-k/2},
\end{multline*}
since $$\left(  1+ \frac{N}{k}\right)^{-k/2}\le 2^{k/2}\left(  1+ \frac{t}{k}\right)^{-k/2} ,$$ for $N\le t\le N+1$ and $k\ge 1$.
Therefore, we obtain \eqref{Eq : energy estimate linear} and \eqref{Eq : decay estimate linear} for the time interval $ [0,t]$ for any $0\le t\le T$ where $T$ is defined as \eqref{T}.

We finally choose initially 
$$\mathcal{E}_\vartheta (f_0)\le \epsilon_0 \le \frac{1}{2C2^{2\vartheta}},$$ and we define 
$$T_2 =\sup_t \left(t:\sup_{0\le s\le t}\mathcal{E}_\vartheta (f(s))\le \frac{1}{2}\right)>0.$$
Since $0\le t\le T_2\le T$, we have from \eqref{Eq : energy estimate linear} that
$$	\sup_{0 \le s\le  T}\mathcal{E}_{\vartheta}(f(s)) \le C 2^{2\vartheta} \mathcal{E}_{\vartheta}(f_0)\le \frac{1}{2}.$$
Thus, we deduce that $T_2=\infty$ from the continuity of $\mathcal{E}_\vartheta$, and the theorem follows.
\section*{Acknowledgement}We thank Hongjie Dong for many helpful discussions.

\end{proof}
\bibliographystyle{hplain}

\bibliography{Landau}{}
 	\end{document}